\newtheorem{defin}{Definition}[section]
\newtheorem{theorem}[defin]{Theorem}
\newtheorem{lemma}[defin]{Lemma}
\newtheorem{example}[defin]{Example}
\newcommand{\R}{\mathbb{R}}
\newcommand{\Q}{\mathbb{Q}}
\newcommand{\Z}{\mathbb{Z}}
\DeclareMathOperator{\NP}{NP}
\newcommand{\cEHZ}{c_{\mathrm{EHZ}}}
\newcommand{\deltain}{\delta^{\mathrm{in}}}
\newcommand{\deltaout}{\delta^{\mathrm{out}}}
\newcommand{\permS}{\mathfrak{S}}
\begin{document}

\title{Computing the EHZ capacity is $\NP$-hard}

\author{Karla Leipold}
\author{Frank Vallentin}

\address{
  Karla Leipold,
  Universit\"at zu K\"oln, Department Mathematik/Informatik, Abteilung
  Mathematik, Weyertal 86-90, 50931 K\"oln,  Germany}
\email{karla.leipold@gmail.com}

\address{
  Frank Vallentin,
  Universit\"at zu K\"oln, Department Mathematik/Informatik, Abteil\-ung
  Mathematik, Weyertal 86-90, 50931 K\"oln,  Germany}
\email{frank.vallentin@uni-koeln.de}

\date{October 4, 2024}

\begin{abstract}
  The Ekeland-Hofer-Zehnder capacity (EHZ capacity) is a fundamental
  symplectic invariant of convex bodies.  We show that computing the
  EHZ capacity of polytopes is $\NP$-hard. For this we reduce the
  feedback arc set problem in bipartite tournaments to computing the
  EHZ capacity of simplices.
\end{abstract}

\maketitle

\markboth{K.~Leipold, F.~Vallentin}{Computing the EHZ capacity is $\NP$-hard}

\section{Introduction}
\label{sec:introduction}

Sympletic capacities are fundamental tools for measuring the
``symplectic size'' of subsets of the standard symplectic vector space
$(\R^{2n},\omega)$, cf.\ the book by Hofer,
Zehnder~\cite{Hofer-Zehnder-1994}. Many symplectic capacities,
starting with the Hofer-Zehnder capacity
from~\cite{Hofer-Zehnder-1994} and the Ekeland-Hofer capacity from
\cite{Ekeland-Hofer-1989} have been proved to coincide on convex
bodies and there common value is called the Ekeland-Hofer-Zehnder
capacity and denoted as~$\cEHZ$.

Our starting point is the following combinatorial formulation of the
EHZ capacity of convex polytopes due to
Haim-Kislev~\cite{Haim-Kislev-2019}. We assume that a $2n$-dimensional
convex polytope $P$ with $k$ facets is given by linear inequalities
\[
  P = P(B,c) = \{x \in \R^{2n} : Bx \leq c\} \quad \text{for} \quad B \in
  \R^{k \times (2n)},\; c \in \R^k.
\]
Let $b_1, \ldots, b_k \in \R^{2n}$ be the rows of $B$, written as
column vectors, and let $c_1, \ldots, c_k$ be the components of $c$.
So the polytope $P$ contains exactly the points $x \in \R^{2n}$ which
satisfy the linear inequalities $b_i^{\sf T} x \leq c_i$, with
$i = 1, \ldots, k$. Then the \emph{EHZ capacity} of $P$ is
\begin{equation}
  \label{eq:cEHZ-optimization-definition}
\begin{split}
  \cEHZ(P) = \frac{1}{2} \bigg(\max\bigg\{\sum_{1 \leq j < i \leq k}
 &  \beta_{\sigma(i)} \beta_{\sigma(j)} \, \omega(b_{\sigma(i)},
 b_{\sigma(j)}) \; : \;\\[-3ex]
&  \quad \sigma \in \permS_k,\; \beta \in \R^k_+,\; 
\beta^{\sf T} c = 1, \; \beta^{\sf T} B = 0\bigg\} \bigg)^{-1},
  \end{split}
\end{equation}
where $\omega$ is the standard symplectic form on $\R^{2n}$,  
\[
   \omega(x, y) = x^{\sf T} J y \quad
   \text{for} \quad x,y \in \R^{2n} \quad \text{and} \quad 
   J = \begin{bmatrix} 0 & I_n\\ -I_n & 0 \end{bmatrix},
\]
where $I_n$ is the identity matrix with $n$ rows and columns,
where $\permS_k$ denotes the symmetric group of $\{1, \ldots, k\}$, and
where $\R^k_+$ is the nonnegative orthant.

\smallskip

Hofer, Zehnder state~\cite[p. 102]{Hofer-Zehnder-1994} ``It turns out
that it is extremely difficult to compute this capacity.'' In this
paper we give a computational justification of this claim by showing
that computing the EHZ capacity is indeed $\NP$-hard, even for
the special case of simplices.

\begin{theorem}
  \label{thm:main}
  The following decision problem is $\NP$-complete: Given the rational inputs
  $\gamma \in \Q$, a matrix $B \in \Q^{k \times (2n)}$, a vector
  $c \in \Q^k$. Suppose that $P(B,c)$ is a
  $2n$-dimensional convex polytope having $k$ facets. Is the EHZ
  capacity of $P(B,c)$ at most $\gamma$; Is
  $\cEHZ(P(B,z)) \leq \gamma$?
\end{theorem}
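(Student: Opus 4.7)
The plan has two components: $\NP$-membership and $\NP$-hardness. For $\NP$-membership, a certificate consists of a permutation $\sigma \in \permS_k$ together with a rational $\beta \in \Q^k_+$ of polynomially bounded bit-size meeting the constraints $\beta^\tp B = 0$, $\beta^\tp c = 1$, and for which the induced quadratic objective is at least $1/(2\gamma)$. Existence of such a polynomial-size rational $\beta$ for each fixed $\sigma$ follows from standard results on quadratic optimization over rational polyhedra.

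For $\NP$-hardness, I reduce from the minimum feedback arc set problem in bipartite tournaments, which is known to be $\NP$-hard. The crucial structural point is: if $P$ is a $2n$-dimensional simplex, then $B \in \R^{(2n+1)\times 2n}$ has rank $2n$, so the subspace $\{\beta : \beta^\tp B = 0\}$ is one-dimensional. Together with $\beta \geq 0$ and the normalization $\beta^\tp c = 1$, this fixes $\beta$ uniquely. Hence for simplices, the EHZ capacity is the reciprocal of twice
\[
  \max_{\sigma \in \permS_k} \sum_{1 \leq j < i \leq k} \beta_{\sigma(i)}\beta_{\sigma(j)}\,\omega(b_{\sigma(i)},b_{\sigma(j)}),
\]
a purely combinatorial linear-ordering problem on the antisymmetric weight matrix $w_{ij} := \beta_i\beta_j\,\omega(b_i,b_j)$.

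Given a bipartite tournament $T=(V_1\cup V_2, A)$, I plan to construct a simplex whose facet normals encode $T$. The natural choice is to place the $V_1$-normals in the Lagrangian subspace $\R^n\times\{0\}$ and the $V_2$-normals in $\{0\}\times\R^n$, so that $\omega$ vanishes within each part while $\omega(b_u,b_v)\in\{+1,-1\}$ across parts, with the sign chosen to encode the direction of the arc between $u$ and $v$. The right-hand side vector $c$ is then tuned so that the uniquely determined $\beta$ is a positive scalar multiple of the all-ones vector. With these choices, the objective becomes a constant plus a positive multiple of
\[
  \bigl|\{\text{arcs consistent with the order }\sigma\}\bigr| - \bigl|\{\text{arcs violated by }\sigma\}\bigr|,
\]
so that maximizing it is equivalent to minimizing the feedback arc set of $T$. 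Setting $\gamma$ to the EHZ value matching the desired feedback-arc-set threshold completes the reduction.

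The main technical obstacle will be making this construction genuinely yield a simplex, i.e.\ ensuring that any $2n$ of the $2n+1$ chosen facet normals are linearly independent. The Lagrangian approach alone fails, because as soon as one of $|V_1|$ or $|V_2|$ exceeds $n$ the vectors within that part become linearly dependent. The construction will therefore need auxiliary or corrected facet normals (for instance, a dummy facet or controlled perturbations out of the Lagrangian subspaces) that restore the simplex condition, maintain the uniform $\beta$, and contribute only an additive constant — independent of $\sigma$ — to the linear-ordering objective. Coordinating these three requirements simultaneously, while also verifying boundedness and the polynomial bit-size of the resulting rational data, is the delicate heart of the proof.
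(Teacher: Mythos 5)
Your overall strategy coincides with the paper's: reduce from feedback arc set in bipartite tournaments, exploit that for a simplex the left kernel of $B$ is one-dimensional so $\beta$ is forced, place one part's normals in $\R^n\times\{0\}$ and encode the arc directions of the tournament in the other part's normals so that $\omega(b_i,b_j)=\pm1$ records orientations, and turn the maximization over $\permS_k$ into a linear-ordering (maximum acyclic subgraph) problem. The paper likewise handles rank deficiency by an $\varepsilon$-perturbation (with $\varepsilon=1/n^4$ and a rounding step), so your ``controlled perturbations'' remark is on target. One small correction: you cannot ``tune $c$'' to make $\beta$ uniform, since the direction of $\beta$ is determined by $B$ alone; the paper instead chooses the closing normal to be $-\sum_i b_i$ so that $\mathbf{e}^{\sf T}B=0$ and takes $c=\mathbf{e}$.

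The genuine gap is your final assumption that the auxiliary (closing) facet normal ``contributes only an additive constant --- independent of $\sigma$ --- to the linear-ordering objective.'' This is false and is precisely where the real work lies. The closing normal $b_{2n+1}=-\sum_i b_i$ pairs nontrivially with the others, and in the associated auxiliary digraph it becomes a vertex $x_{2n+1}$ with both incoming and outgoing arcs (with multiplicities); how many of these arcs are consistent with an ordering $\sigma$ depends on where $x_{2n+1}$ is placed relative to its in- and out-neighbours, so its contribution is emphatically $\sigma$-dependent and cannot be separated from the optimization. The paper resolves this with Lemma~\ref{lem:Eulerian}: the auxiliary graph is Eulerian (every vertex has equal in- and out-degree, since $e_i^{\sf T}W\mathbf{e}=0$), and an Eulerian rerouting argument --- replacing $\deltain(R)$ by $\deltaout(R)$ for the set $R$ of vertices reachable from $x_{2n+1}$ inside an optimal acyclic subgraph --- shows one may assume $x_{2n+1}$ is a source at the optimum, whence the two optima differ by exactly $|\deltaout(x_{2n+1})|$. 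Without this lemma (or a substitute), your reduction does not yet establish that the EHZ value determines the feedback arc set number of the original tournament, so the hardness proof is incomplete as proposed.
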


To prove this theorem we shall reduce the feedback arc set problem in
bipartite tournaments to computing the EHZ capacity of a simplex, that
is to $2n$-dimensional polytopes having $k = 2n+1$ facets. The feedback
arc set problem in bipartite tournaments is known to be
$\NP$-complete; a result by Guo, H\"uffner, Moser
\cite{Guo-Hueffner-Moser-2007}.

\subsection{Notation from combinatorial optimization}

First we recall, for reference, some standard notation from
combinatorial optimization.  Let $D = (V, A)$ be a directed graph with
vertex set $V$ and with family of arcs $A$, allowing for multiple
arcs; see Schrijver \cite{Schrijver-2003}. Such a directed graph can
be represented by its adjacency matrix $M$ of size $|V| \times |V|$,
where the entry $M_{ij}$ equals the number of arcs from $v_i$ to $v_j$.

\smallskip

As usual we define a \emph{walk} in a directed graph to be a sequence
of arcs $a_1, \ldots, a_k \in A$ so that $a_i = (v_{i-1},v_i)$ with
$i = 1, \ldots, k$. A walk is \emph{closed} if \emph{start vertex} $v_0$
coincides with \emph{end vertex} $v_k$. A \emph{path} is a walk in which
vertices $v_0, \ldots, v_k$ are distinct. A \emph{directed circuit} is
a closed walk in which all vertices are distinct except that the start
vertex $v_0$ coincides with the end vertex $v_k$.

A directed graph is called \emph{acyclic} if it does not contain any
directed circuits.

For a subset of the vertex set $U \subseteq V$ we define
\[
  \deltaout(U) = \{(u,v) \in A : u \in U, v \in V \setminus
  U\}
\]
the multiset of arcs of $D$ leaving $U$, and similarly
\[
  \deltain(U) = \{(v,u) \in A : v \in V \setminus U, u \in U\}
\]
the arcs entering $U$. If $U$ consists only out of one vertex $v$ we
also write $\deltaout(v)$ instead of $\deltaout(\{v\})$, and
$\deltain(v)$ instead of $\deltain(\{v\})$. Then, the \emph{indegree}
of vertex $v$ is $|\deltain(v)|$ and its \emph{outdegree} is
$|\deltaout(v)|$, where we also count multiple arcs.

A walk in $D$ is called \emph{Eulerian} if each arc of $D$ is
traversed exactly once. The directed graph is called \emph{Eulerian}
whenever it has a closed Eulerian walk. A directed graph is Eulerian
if and only if
\begin{enumerate}
\item for any pair of vertices $u$, $v$ there is a path with start
  vertex $u$ and end vertex $v$; that is, $D$ is \emph{strongly connected},
\item the indegree of every vertex coincides with its outdegree.
\end{enumerate}

\smallskip

A \emph{feedback arc set} of $D$ is a subfamily of the arcs which
meets all directed circuits of $D$.  That is, removing these arcs from
$D$ would make $D$ acyclic.  The decision variant of the
\emph{feedback arc set problem} asks given a directed graph
$D = (V, A)$ and a natural number $\delta$, does $D$ have a feedback
arc set of cardinality at most $\delta$?

\smallskip

The \emph{maximum acyclic subgraph problem} is complementary to the
feedback arc set problem: Its optimization variant asks for the
maximum cardinality (using multiplicities) of arcs $A' \subseteq A$ so
that the resulting directed subgraph $(V, A')$ of $D$ is acyclic. A
subfamily $A'$ of $A$ defines an acyclic subgraph of $D$ if and only
if $A \setminus A'$ is a feedback arc set of $D$.

\smallskip

A \emph{bipartite tournament} is a directed graph $D = (U \cup V, A)$
whose vertex set is the disjoint union of vertices
$U = \{u_1, \ldots, u_n\}$ and $V = \{v_1, \ldots, v_m\}$ and in which
all of its arcs $A$ have the property that for every pair $i, j$, with
$i = 1, \ldots, n$, $j = 1, \ldots, m$, either $(u_i,v_j) \in A$ or
$(v_j,u_i) \in A$. So the underlying undirected graph is a complete
bipartite graph without multiple edges.

\subsection{Outline of the polynomial time reduction; Proof of
  Theorem~\ref{thm:main}}

Now we outline our polynomial time reduction, the details are provided
in the subsequent sections. We describe a polynomial time algorithm to
decide if a given bipartite tournament has a feedback arc set of at
most $\delta$ arcs by calling an oracle to solve the decision variant
of the EHZ-capacity of a simplex.  We will demonstrate that the feedback 
arc set of a complete bipartite tournament $D = (U \cup V, A)$ with $n = |U|$ and $m = |V|$, where we
assume that $n \geq m$ can be computed by the formula
\begin{equation}\label{mainformula}
|\tilde{A}|
- \frac{1}{2}
\left(\left\lfloor\frac{(2n+1)^2}
{2\cEHZ(P(\tilde{B},\textbf{e}))} + \frac{1}{2}
\right\rfloor+\Delta\right)-|\deltaout(x_{2n+1})|.
\end{equation} 
In the following we will explain this formula in detail. In particular we shall show that 
all appearing constants can be constructed in polynomial time. Hence, 
after solving \eqref{mainformula} for $\cEHZ(P(\tilde{B},\textbf{e}))$,
by the $\NP$-hardness of the
decision variant of the feedback arc set problem for bipartite
tournaments \cite{Guo-Hueffner-Moser-2007}, the decision variant of
the EHZ capacity of polytopes is also $\NP$-hard.

\smallskip

In Section~\ref{sec:from-feedback-arc-set-to-cEHZ} we transform the complete bipartite tournament 
$D$ into a simplex $P(\tilde{B},\textbf{e})$ through the block matrix
$B \in \Z^{(2n+1)\times (2n)}$ defined by
\[
    B = \begin{bmatrix}
    I_n & 0\\
    0 & S\\
    -\mathbf{e}^{\sf T} & -\mathbf{e}^{\sf T} S
  \end{bmatrix},
\]
where $\mathbf{e}^{\sf T} = (1 \, \ldots \, 1)$ is the all-ones (row)
vector, and $S \in \{-1,0,1\}^{n \times n}$ is defined componentwise
by
\begin{equation}\label{eq:simplex}
  S_{ij} = \begin{cases}
    1 & \text{if $(u_i, v_j) \in A$,}\\
    -1 & \text{if $(v_j, u_i) \in A$,}\\
    0 & \text{otherwise.}
    \end{cases}
\end{equation}
For simplicity we suppose in this outline that the matrix $S$ has full rank
$n$. If the matrix $S$ has not full rank, then we have to perform a
slight perturbation of it, yielding $\tilde{B}$; we give details about this technical issue
in Section~\ref{sec:from-feedback-arc-set-to-cEHZ}. It follows that
the polytope $P(B,\mathbf{e})$ is a simplex because the row vectors
$b_1, \ldots, b_{2n+1}$ of $B$ sum up to zero; that is $\sum_{i=1}^{2n+1}b_i=0$. 
This implies that the
variable $\beta \in \mathbb{R}^{2n+1}_+$ in the maximization
problem~\eqref{eq:cEHZ-optimization-definition} is determined
uniquely: $\beta = \frac{1}{2n+1} \mathbf{e}$. So the computation of
$\cEHZ(P(B,\mathbf{e}))$ simplifies to
\begin{equation}
  \label{eq:cEHZ-simplified}
\begin{split}
&  \cEHZ(P(B,\mathbf{e})) \\
& \;\;   = \; \;\frac{(2n+1)^2}{2} \bigg(\max\bigg\{\sum_{1 \leq j < i \leq 2n+1}
  \omega(b_{\sigma(i)},
  b_{\sigma(j)}) \; : \; \sigma \in \permS_{2n+1} \bigg\}\bigg)^{-1}.
\end{split}
\end{equation}

Since  $\omega(b_{\sigma(i)}, b_{\sigma(j)}) = b_{\sigma(i)}^{\sf T} J b_{\sigma(j)}$, 
in \eqref{eq:cEHZ-simplified} we are interested in simultaneously
permuting rows and columns of the matrix
$W = B J B^{\sf T} \in \mathbb{R}^{(2n+1) \times (2n+1)}$ so that the
sum of the entries of its lower triangular part ($i > j$) is
maximized. The matrix $W$ has the following block form
\[
  W =
\begin{bmatrix}
    I_n & 0\\
    0 & S\\
   -\mathbf{e}^{\sf T} & -\mathbf{e}^{\sf T} S\\
  \end{bmatrix}
  \,
  \begin{bmatrix}
    0 & I_n\\
    -I_n & 0\\
  \end{bmatrix}
  \,
  \begin{bmatrix}
I_n & 0 &   -\mathbf{e} &\\
0 & S^{\sf T} & -S^{\sf T} \mathbf{e}
\end{bmatrix}  
  =
  \begin{bmatrix}
0 & S^{\sf T} & -S^{\sf T}\textbf{e} \\
-S & 0 & S\textbf{e} \\
\textbf{e}^{\sf T}S & -\textbf{e}^{\sf T}S^{\sf T} & 0 \\
  \end{bmatrix}.
\]

\smallskip

In Section~\ref{sec:interpretation} we interpret the result of computing
$\cEHZ(P(B,\mathbf{e}))$ as given in \eqref{eq:cEHZ-simplified} in
terms of the feedback arc set problem. 
The matrix $W$ is skew-symmetric; $W^{\sf T} = -W$. Take the nonnegative part of $W$ to
define the matrix $M$; $M_{ij} = \max\{0,W_{ij}\}$. Then $W = M -
M^{\sf T}$ and for any permutation matrix $P_{\sigma}$, with $\sigma
\in \permS_{2n+1}$, we have
\begin{equation}\label{eq:WtoM}
  P_{\sigma}^{\sf T} W P_{\sigma} =   P_{\sigma}^{\sf T} (M - M^{\sf
    T}) P_{\sigma} = 2  P_{\sigma}^{\sf T} M P_{\sigma} -
  P_{\sigma}^{\sf T} (M + M^{\sf T}) P_{\sigma}.
\end{equation}
Hence, maximizing the sum of the entries of the lower triangular part
of $P_{\sigma}^{\sf T} W P_{\sigma}$ is equivalent to maximizing the
sum of the entries of the lower triangular part of
$P_{\sigma}^{\sf T} M P_{\sigma}$, because $M + M^{\sf T}$ is a
symmetric matrix and so the sum of the entries of the lower triangular
part of $P_{\sigma}^{\sf T} (M + M^{\sf T}) P_{\sigma}$ is independent
of the permutation $\sigma$. We denote this latter sum by $\Delta$, a constant which 
appears in \eqref{mainformula}.

The matrix $M$ defines the adjacency matrix of an auxiliary directed
graph on $2n+1$ vertices and $|\tilde{A}|$ arcs (appearing in~\eqref{mainformula}),
which is almost the original complete
bipartite tournament $D$. There are two essential differences between
the new auxiliary graph and the original graph $D$: A minor difference
occurs when $n > m$, then the new graph has $n-m$ isolated vertices,
which simply can be ignored. A major difference is that there is an
extra vertex $x_{2n+1}$ which is connected to some vertices in
$U \cup V$ by potentially multiple arcs. One feature of the
extra vertex is that it makes the auxiliary graph Eulerian.

It is known, the argument is recalled in
Section~\ref{sec:interpretation}, that maximizing the sum of the
entries of the lower triangular part of
$P_{\sigma}^{\sf T} M P_{\sigma}$ amounts to solving the maximum
acyclic subgraph problem on the auxiliary graph. Using this solution of
the maximum acyclic subgraph problem one can determine, by
complementarity, an optimal solution of the feedback arc set
problem of the auxiliary graph.

\smallskip

Then in Section~\ref{sec:cleaning-up} we relate this optimal solution
of the auxiliary graph to the feedback arc set problem of
the original graph: For this we shall use the fact
(Lemma~\ref{lem:Eulerian}) that feedback arc sets of the
original graph and of the auxiliary graph differ by exactly
$|\deltaout(x_{2n+1})|$ (appearing in~\eqref{mainformula}) many arcs. \hfill $\Box$

\section{From the feedback arc set problem to the EHZ capacity}
\label{sec:from-feedback-arc-set-to-cEHZ}

Let $D = (U \cup V, A)$ be a complete bipartite tournament with
$n = |U|$ and $m = |V|$, where we assume that $n \geq m$. Define the
matrix $S \in \{-1,0,1\}^{n \times n}$ componentwise as described in
\eqref{eq:simplex}.  If the matrix $S$ is not of full rank $n$ we
perform a small perturbation of the row vectors of $S$. For this let
$s_i$ be the $i$-th row of $S$ written as a column vector. Choose a
rational $\varepsilon > 0$. Then a desired perturbation can be
computed by the following steps, which are all easy, polynomial time,
linear algebra computations (for the details we refer, for instance,
to the book Schrijver \cite{Schrijver-1986}): Let $L \subseteq \Q^n$
be the subspace spanned by $s_1, \ldots, s_n$.

\begin{enumerate}
\item Among the vectors $s_1, \ldots, s_n$ choose a basis of
  $L$. Without loss of generality we may assume, after reordering,
  that $s_1, \ldots, s_k$ is such a basis.
\item Use Gram-Schmidt orthogonalization to compute an orthogonal
  basis of the orthogonal complement $L^{\perp}$.  Let
  $t_{k+1}, \ldots, t_n \in \Q^n$ be such an orthogonal basis
  normalized by $\|t_{k+1}\|_{\infty} = \ldots = \|t_{n}\|_{\infty} = 1$, where
  $\|\cdot\|_{\infty}$ denotes the $\ell_\infty$-norm.
\item Perturb the vectors $s_{k+1}, \ldots, s_n$ to
\[
  \tilde{s}_{k+1} =  s_{k+1}+\varepsilon
  t_{k+1}, \; \ldots, \; \tilde{s}_{n}= s_{n}+\varepsilon t_{n}.
\]
The first $k$ vectors stay unchanged: $\tilde{s}_1 = s_1, \ldots,
\tilde{s}_k = s_k$.
\end{enumerate}

In this way we get a perturbed matrix $\tilde{S}$ with rows
$\tilde{s}_1, \ldots, \tilde{s}_n$. From this we construct the block matrix
\[
  \tilde{B} =
  \begin{bmatrix}
I_n & 0\\
0 & \tilde{S}\\
-\mathbf{e}^{\sf T} & -\mathbf{e}^{\sf T} \tilde{S}
  \end{bmatrix}
\]
The convex hull of $\{\tilde{b}_1,\ldots, \tilde{b}_{2n+1}\}$, 
where $\tilde{b}_i$ are the row vectors of $\tilde{B}$, is a simplex with the origin in its interior;
by construction, the first $2n$ row vectors of $\tilde{B}$ are
linearly independent and all $2n+1$ row vectors sum up to zero. 
The polar of this simplex is equal to $P(\tilde{B},\textbf{e})$. 
Therefore, see, for instance, Ziegler \cite[Chapter 2.3]{Ziegler-1995}, $P(\tilde{B},\textbf{e})$
is a simplex.

\smallskip

In the following we will consider the EHZ capacity of this simplex
$P(\tilde{B},\mathbf{e})$. The principal computational task to
determine $\cEHZ(P(\tilde{B},\mathbf{e}))$ is to determine the maximum
\begin{equation}
   \label{eqn:simplex} 
  \max\bigg\{\sum_{1 \leq j < i \leq 2n+1} (P_\sigma^{\sf T} \tilde{W}  P_\sigma )_{ij}
   \; : \; \sigma \in \permS_{2n+1} \bigg\}
\end{equation}
for $\tilde{W} = \tilde{B} J \tilde{B}^{\sf T}$ and where $P_{\sigma}$
is the permutation matrix corresponding to the permutation
$\sigma \in \permS_{2n+1}$.  We have
\[
  \tilde{W} =
  \begin{bmatrix}
    0 & \tilde{S}^{\sf T} &  -\tilde{S}^{\sf T}\textbf{e}\\
    -\tilde{S} & 0 & \tilde{S}\textbf{e}\\
 \textbf{e}^{\sf T}\tilde{S} &-\textbf{e}^{\sf T}\tilde{S}^{\sf T} & 0\\
  \end{bmatrix}.
\]
Later on, for interpretation of the EHZ capacity in terms of an (integral) adjacency matrix, 
it is more convenient to work with $W$ rather than with $\tilde{W}$.
Since all entries of $\tilde{W}$ differ from $W$ only
by at most $n \varepsilon$, we can choose $\varepsilon = 1/n^4$ so that
\[
  \left\lfloor \sum_{1 \leq j < i \leq 2n+1} (P_\sigma^{\sf T}
    \tilde{W}  P_\sigma )_{ij} + \frac{1}{2} \right\rfloor =
  \sum_{1 \leq j < i \leq 2n+1} (P_\sigma^{\sf T}
    W  P_\sigma )_{ij}
  \]
for every $\sigma \in \permS_{2n+1}$.

\begin{example}
  \label{ex:example}
  We illustrate our construction for the following complete bipartite
  tournament.
  \begin{center}
  \begin{tikzpicture}
    \foreach \i in {1,2,3}
      \node[circle, draw, minimum size=0.2cm] (u\i) at (0,3-\i) {$u_\i$};
  
    \foreach \i in {1,2}
      \node[circle, draw, minimum size=0.2cm] (v\i) at (2.5,3-\i-0.5) {$v_\i$};
  
    \draw[->, line width=1pt] (u1) to (v1);
    \draw[->, line width=1pt] (v2) to (u1);
    \draw[->, line width=1pt] (v1) to (u2);
    \draw[->, line width=1pt] (u2) to (v2);
    \draw[->, line width=1pt] (u3) to (v1);
    \draw[->, line width=1pt] (u3) to (v2); 
  \end{tikzpicture}
  \end{center}
  Here
  \[
    s_1 = (1,-1,0), \; s_2= (-1,1,0), \; s_3 = (1,1,0),
  \]
  and
  \[
    \tilde{s}_1 = s_1, \; \tilde{s}_2 = s_2, \; \tilde{s}_3 = (1,1,\varepsilon),
  \]
  therefore
  \[
    \tilde{W} = \left(
      \begin{array}{ccc|ccc|c}
      0 & 0& 0 & 1 & -1 & 1 & -1\\
      0 & 0 & 0 & -1 & 1 & 1 & -1\\
      0 & 0 & 0 & 0 & 0 & \varepsilon & -\varepsilon\\\hline
      -1 & 1 & 0 & 0 & 0 & 0 & 0\\
      1 & -1 & 0 & 0 & 0 & 0 & 0\\
        -1 & -1 & -\varepsilon & 0 & 0  & 0 & 2+\varepsilon \\ \hline
       1 & 1 & \varepsilon & 0 & 0 & -2-\varepsilon & 0\\
      \end{array}
      \right).
  \]
  \end{example}

\section{Interpretation of the EHZ capacity}
\label{sec:interpretation}

In this section we interpret the maximization problem
\[
  \max\bigg\{\sum_{1 \leq j < i \leq 2n+1} (P_\sigma^{\sf T} W
  P_\sigma )_{ij} \; : \; \sigma \in \permS_{2n+1} \bigg\}
\]
in terms of the feedback arc set problem.

We define the matrix $M \in \R^{(2n+1) \times (2n+1)}$ by taking the
nonnegative part of $W$ so that $M_{ij} = \max\{0,W_{ij}\}$, and then
$W = M - M^{\sf T}$. For any permutation matrix $P_{\sigma}$, with
$\sigma \in \permS_{2n+1}$, we have
\[
  P_{\sigma}^{\sf T} W P_{\sigma} =   P_{\sigma}^{\sf T} (M - M^{\sf
    T}) P_{\sigma} = 2  P_{\sigma}^{\sf T} M P_{\sigma} -
  P_{\sigma}^{\sf T} (M + M^{\sf T}) P_{\sigma}.
\]
Hence, as noted in the introduction, maximizing the sum of the entries
of the lower triangular part of $P_{\sigma}^{\sf T} W P_{\sigma}$ is
equivalent to maximizing the sum of the entries of the lower
triangular part of $P_{\sigma}^{\sf T} M P_{\sigma}$.

\smallskip

The matrix $M$ defines the adjacency matrix of an auxiliary directed
graph $\tilde{D}$ on $2n+1$ vertices, which is, as the example below illustrates, related to $D$ by
\begin{enumerate}
\item identifying $x_1, \ldots, x_m$ with $v_1, \ldots, v_m$,
\item adding isolated vertices $x_{m+1}, \ldots, x_n$,
\item identifying $x_{n+1},\ldots, x_{2n}$ with $u_1, \ldots, u_n$,
\item adding the extra vertex $x_{2n+1}$,
\item reversing all arcs occorring in $D$.
\end{enumerate}

\begin{example} (first continuation of Example~\ref{ex:example})
  \begin{center}
    \begin{minipage}[t]{6cm}
      \vspace*{-4cm}
  \[
    M =
    \left(
      \begin{array}{ccc|ccc|c}
      0 & 0& 0 & 1 & 0 & 1 & 0\\
      0 & 0 & 0 & 0 & 1 & 1 & 0\\
      0 & 0 & 0 & 0 & 0 & 0 & 0 \\\hline
      0 & 1 & 0 & 0 & 0 & 0 & 0\\
      1 & 0 & 0 & 0 & 0 & 0 & 0\\
       0 & 0 & 0 & 0 & 0  & 0 & 2 \\ \hline
       1 & 1 & 0 & 0 & 0 & 0 & 0\\
      \end{array}
      \right)
    \]
  \end{minipage}
  \begin{minipage}[t]{6cm}
    \begin{tikzpicture}
    \foreach \i in {1,2,3}
      \node[circle, draw, minimum size=0.2cm] (x\i) at (0,4-\i) {$x_\i$};
  
    \foreach \i in {4,5,6}
      \node[circle, draw, minimum size=0.2cm] (x\i) at (2.5,7-\i) {$x_\i$};
  
     \node[circle, draw, minimum size=0.2cm] (x7) at (1.25,0) {$x_7$};
      
    \draw[->, line width=1pt] (x1) to (x4);
    \draw[->, line width=1pt] (x1) to (x6);
    \draw[->, line width=1pt] (x2) to (x5);
    \draw[->, line width=1pt] (x2) to (x6);
    \draw[->, line width=1pt] (x4) to (x2);
    \draw[->, line width=1pt] (x5) to (x1);
  
    \draw[->, line width=1pt] (x7) to [bend left=90] (x1);
    \draw[->, line width=1pt] (x7) to [bend left=80] (x2);
    \draw[->, line width=1pt] (x6) to [bend left = 20] (x7);
    \draw[->, line width=1pt] (x6) to [bend left = 30] (x7);
  \end{tikzpicture}
  \end{minipage}
  \end{center}
  \end{example}

Now we show that the maximization problem
\begin{equation}
\label{eq:MWAC}
  \max\bigg\{\sum_{1 \leq j < i \leq 2n+1} (P_\sigma^{\sf T} M
  P_\sigma )_{ij} \; : \; \sigma \in \permS_{2n+1} \bigg\}
\end{equation}
determines a maximum acyclic subgraph in $\tilde{D}$. This is
a simple, known fact; the argument uses the concept of topological
ordering (see, for instance, Knuth \cite{Knuth-1968}).

Let $D = (V, A)$ be a directed graph with $V = \{v_1, \ldots,
v_n\}$. A \emph{topological ordering} of $D$ is a permutation
$\sigma \in \permS_n$ so that
$(v_{i}, v_{j}) \in A$ only if
$\sigma^{-1}(i) < \sigma^{-1}(j)$, with $i, j = 1, \ldots, n$. That
is, the permutation $\sigma$ determines a ranking of the vertices so
that vertex $v_i$ becomes ranked at the $\sigma^{-1}(i)$-th
position. Then there is an arc between $v_i$ and $v_j$ only if vertex
$v_i$ is ranked before vertex $v_j$. There exists a topological
ordering of a directed graph if and only if the directed graph is
acyclic. Therefore, finding a maximum acyclic subgraph of $\tilde{D}$
is the same as finding a permutation $\sigma$ for which the number of
arcs in $\tilde{D}$ respecting the topological ordering given by
$\sigma$ is maximized, which is
\[
  \begin{split}
&  \max\bigg\{\sum_{\substack{i,j = 1, \ldots, n\\ \sigma^{-1}(i) <
    \sigma^{-1}(j)}} M_{ij} \; : \; \sigma \in \permS_{2n+1} \bigg\}
\\
 = \; & \max\bigg\{\sum_{1 \leq i < j \leq 2n+1} M_{\sigma(i),
   \sigma(j)} \; : \; \sigma \in \permS_{2n+1} \bigg\}.\\
  = \; & \max\bigg\{\sum_{1 \leq i < j \leq 2n+1} (P_\sigma^{\sf T} M
  P_\sigma)_{ij} \; : \; \sigma \in \permS_{2n+1} \bigg\}.
 \end{split}
\]
To maximize the sum of the lower triangular part instead of the sum of
the upper triangular part, that is summing over indices
$1 \leq j < i \leq 2n+1$ instead of $1 \leq i < j \leq 2n+1$, we
simply reverse all the arcs in $\tilde{D}$.  This global reversing operation
does not change the cardinality of the maximum acyclic subgraph of
$\tilde{D}$. Therefore, the maximization problem \eqref{eq:MWAC}
provides a solution of the maximum acyclic sugraph problem of
the auxilary graph $\tilde{D}$.

\begin{example} (second continuation of Example~\ref{ex:example}) A
  maximum acyclic subgraph of the auxiliary graph $\tilde{D}$
  is depicted by thick arcs. It has cardinality $7$. 
\begin{center}
\begin{tikzpicture}
  \foreach \i in {1,2,3}
    \node[circle, draw, minimum size=0.2cm] (x\i) at (0,4-\i) {$x_\i$};

  \foreach \i in {4,5,6}
    \node[circle, draw, minimum size=0.2cm] (x\i) at (2.5,7-\i) {$x_\i$};

   \node[circle, draw, minimum size=0.2cm] (x7) at (1.25,0) {$x_7$};
    
  \draw[->, line width=0.5pt] (x1) to (x4);
  \draw[->, line width=1.5pt] (x1) to (x6);
  \draw[->, line width=1.5pt] (x2) to (x5);
  \draw[->, line width=1.5pt] (x2) to (x6);
  \draw[->, line width=1.5pt] (x4) to (x2);
  \draw[->, line width=1.5pt] (x5) to (x1);

  \draw[->, line width=0.5pt] (x7) to [bend left=90] (x1);
  \draw[->, line width=0.5pt] (x7) to [bend left=80] (x2);
  \draw[->, line width=1.5pt] (x6) to [bend left = 20] (x7);
  \draw[->, line width=1.5pt] (x6) to [bend left = 30] (x7);
\end{tikzpicture}
\end{center}
The permutation
\[
  \sigma =
  \begin{pmatrix}
1 & 2 & 3 & 4 & 5 & 6 & 7 \\
4 & 2 & 7 & 1 & 3 & 5 & 6
\end{pmatrix}
\]
gives an optimal topological ordering with
\[
  P_\sigma^{\sf T} M P_\sigma =
\left(\;
  \begin{array}{ccccccccc}
    0 & \cellcolor{gray!20} 1 & \cellcolor{gray!20} 0 & \cellcolor{gray!20} 0 & \cellcolor{gray!20} 0 & \cellcolor{gray!20} 0 & \cellcolor{gray!20} 0 \\
    0 & 0 & \cellcolor{gray!20}  1 & \cellcolor{gray!20}  0 & \cellcolor{gray!20} 1 & \cellcolor{gray!20} 0 & \cellcolor{gray!20} 0 \\
    0 & 0 & 0 & \cellcolor{gray!20} 1 & \cellcolor{gray!20} 0 & \cellcolor{gray!20} 0 & \cellcolor{gray!20}  0 \\
    1 & 0 & 0 & 0 & \cellcolor{gray!20} 1 & \cellcolor{gray!20} 0 & \cellcolor{gray!20} 0 \\
    0 & 0 & 0 & 0 & 0 & \cellcolor{gray!20} 2 & \cellcolor{gray!20} 0 \\
    0 & 1 & 0 & 1 & 0 & 0 & \cellcolor{gray!20} 0 \\
    0 & 0 & 0 & 0 & 0 & 0 & 0 \\
\end{array}
\;\right).
\]
(Note that the permutation matrix $P_\sigma$ is acting on the rows.)
\end{example}

\section{Removing the extra vertex}
\label{sec:cleaning-up}

In the last step we show how the solution of the maximization
problem~\eqref{eq:MWAC} which determines the value of a maximum
acyclic subgraph of the auxiliary graph $\tilde{D}$ can be
used to solve the feedback arc set problem for the original graph $D$. 

If the extra vertex $x_{2n+1}$ is isolated, then there is nothing to do, otherwise we will
make use of the fact, that $\tilde{D}$ is Eulerian (after deleting the isolated vertices). To verify that $\tilde{D}$ is indeed Eulerian, 
one directly checks, for every vertex $x_i$, that $|\deltain(x_i)| = |\deltaout(x_i)|$ holds:
\[
\begin{split}
|\deltaout(x_i)| - |\deltain(x_i)| & =
\sum_{j=1}^{2n+1} M_{ij} - \sum_{j=1}^{2n+1} M_{ji}  
= e_i^{\sf T} M \mathbf{e} - \mathbf{e} M e_i^{\sf T} \\
& = e_i^{\sf T} (M - M^{\sf T}) \mathbf{e} = e_i^{\sf T} W \mathbf{e} = 0,
\end{split}
\]
where $e_i$ is the $i$-th standard basis vector in $\R^{2n+1}$.
Moreover, since $D$ is complete, it follows that $\tilde{D}$ is strongly connected.

\smallskip

The following argument is from Perrot, Van-Pham \cite[Section
3]{Perrot-Van-Pham-2015}. We provide the necessary details to make
this paper self-contained.

\begin{lemma}
\label{lem:Eulerian}
 The minimum cardinality of a feedback arc set in $D$
  equals the minimum cardinality of a feedback arc set in $\tilde{D}$
  minus $|\deltaout(x_{2n+1})|$.
\end{lemma}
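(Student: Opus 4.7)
My plan is to establish the equality by proving both inequalities, leveraging the Eulerian property of $\tilde{D}$ (restricted to non-isolated vertices) for the harder direction.

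For the easier direction $\min\mathrm{FAS}(\tilde{D}) \le \min\mathrm{FAS}(D) + |\deltaout(x_{2n+1})|$, I use a direct construction: for any feedback arc set $F$ of $D$, the set $F \cup \deltaout(x_{2n+1})$ is a feedback arc set of $\tilde{D}$. Indeed, every directed circuit of $\tilde{D}$ either lies entirely in $\tilde{D} - x_{2n+1}$---which, up to arc reversal, is $D$ with some isolated vertices added, and hence is hit by $F$---or passes through $x_{2n+1}$, in which case it uses at least one arc of $\deltaout(x_{2n+1})$. Taking $F$ to be a minimum feedback arc set of $D$ gives the bound.

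For the reverse inequality $\min\mathrm{FAS}(\tilde{D}) \ge \min\mathrm{FAS}(D) + |\deltaout(x_{2n+1})|$, I aim to exhibit a minimum feedback arc set $F^{*}$ of $\tilde{D}$ satisfying $\deltaout(x_{2n+1}) \subseteq F^{*}$. Given such an $F^{*}$, the restriction $F^{*} \setminus \deltaout(x_{2n+1})$ is a feedback arc set of $\tilde{D} - x_{2n+1}$, and hence of $D$ (since arc reversal preserves feedback arc sets), of cardinality $|F^{*}| - |\deltaout(x_{2n+1})|$, yielding the desired bound on $\min\mathrm{FAS}(D)$. To produce such $F^{*}$ I plan an exchange argument built on the Eulerian decomposition: breaking a closed Eulerian walk of $\tilde{D}$ at every visit to $x_{2n+1}$ yields $|\deltaout(x_{2n+1})|$ arc-disjoint closed walks $W_1, \dots, W_k$, each of the form $x_{2n+1} \to y_{i,1} \to \dots \to y_{i,r_i} \to x_{2n+1}$ with all interior vertices in $V(D)$. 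Starting from any minimum feedback arc set $F'$ of $\tilde{D}$, for any arc $e \in \deltaout(x_{2n+1}) \setminus F'$ the unique closed walk $W_i$ containing $e$ must be broken by $F'$ at some arc $e' \in A(W_i)$ (since each $W_i$ contains a directed circuit and $F'$ hits every circuit). The swap $F' \mapsto (F' \setminus \{e'\}) \cup \{e\}$ preserves the cardinality of $F'$ while strictly increasing $|F' \cap \deltaout(x_{2n+1})|$; iterating until no such $e$ remains produces the desired $F^{*}$.

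The main obstacle is verifying that each such exchange preserves the feedback arc set property, i.e.\ that every directed circuit of $\tilde{D}$ previously covered by $e'$ alone remains covered---either by $e$ itself or by another arc of $F'$---after the swap. Establishing this requires careful use of the strong connectivity of $\tilde{D}$ together with the arc-disjointness of the Eulerian decomposition into $W_1, \dots, W_k$, and this is exactly the combinatorial argument provided in detail in Perrot and Van-Pham.
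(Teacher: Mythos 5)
Your first inequality is fine and matches the paper's (which states it for the complementary maximum acyclic subgraph problem). The genuine gap is in the second direction, at exactly the step you flag as ``the main obstacle'': the single-arc swap $F' \mapsto (F'\setminus\{e'\})\cup\{e\}$ does \emph{not} preserve the feedback arc set property, and no amount of care about strong connectivity or arc-disjointness of the walks $W_1,\dots,W_k$ will fix it. The arc $e'$ that $F'$ uses to break the circuit through $e$ inside $W_i$ is an arbitrary arc of the graph; it may simultaneously be the \emph{only} arc of $F'$ on some directed circuit that avoids $x_{2n+1}$ altogether, and after the swap that circuit is uncovered, since $e\in\deltaout(x_{2n+1})$ cannot lie on it. So the proposed exchange can strictly decrease the set of circuits covered, and the iteration has no reason to terminate at a feedback arc set. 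Your appeal to Perrot--Van-Pham does not close this gap, because their argument (reproduced in the paper) is not a repair of a walk-based single-arc swap; it is a structurally different exchange.

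The argument that works is a \emph{cut} exchange. In the complementary formulation, let $A'$ be a maximum acyclic subgraph of $\tilde{D}$ and let $R$ be the set of vertices reachable from $x_{2n+1}$ using only arcs of $A'$. Replace $A'$ by $A'' = \bigl(A'\setminus\deltain(R)\bigr)\cup\deltaout(R)$. Acyclicity is immediate: $A''$ contains no arc entering $R$, so any circuit of $A''$ lies entirely inside $R$ or entirely outside $R$ and hence consists of arcs of $A'$. Cardinality is preserved because $\deltaout(R)\cap A'=\emptyset$ by the definition of $R$, so $|A''|\geq |A'|-|\deltain(R)|+|\deltaout(R)|$, and the Eulerian property enters only as the degree balance $|\deltain(R)|=|\deltaout(R)|$ across the cut $(R,\,V\setminus R)$ --- not via a decomposition of an Eulerian walk into closed walks. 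In $A''$ the vertex $x_{2n+1}$ has no incoming arc, and maximality then forces $\deltaout(x_{2n+1})\subseteq A''$, which is the complementary form of the $F^{*}$ you were trying to construct. Your overall plan (normalize an optimal solution at $x_{2n+1}$, then delete the extra vertex) is the right one; what is missing is the correct exchange mechanism, and the one you propose fails.
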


\begin{proof}
We show the complementary result for the maximum acyclic subgraph
problem.

\smallskip

Let $A'$ be a maximum acyclic subgraph of $D$. Then we can use these
arcs together with the arcs $(x_{2n+1}, x_i)$, with
$i = 1, \ldots, 2n$, in $\tilde{D}$ leaving the extra vertex
$x_{2n+1}$ to determine an acyclic subgraph of $\tilde{D}$ of
cardinality $|A'| + |\deltaout(x_{2n+1})|$.

\smallskip

Conversely, let $A'$ be a maximum acyclic subgraph of $\tilde{D}$. Now
we describe a procedure to determine a maximum
acyclic subgraphs $A''$ so that the extra vertex $x_{2n+1}$ has no incoming
arc. Because $A''$ is maximum it has to contain all
$|\deltaout(x_{2n+1})|$ arcs leaving $x_{2n+1}$. Removing these arcs
determines an acyclic subgraph of the original graph $D$ of
cardinality $|A'| - |\deltaout(x_{2n+1})|$.

The procedure works as follows. We consider all vertices
$R$ which are reachable by walks from start
vertex $x_{2n+1}$ using only in arcs in $A'$. Define
\[
  A'' = \left(
A'
\setminus \deltain(R)
\right) \cup \deltaout(R).
\]
Then $A''$ determines again a maximum acyclic subgraph of
$\tilde{D}$ because by removing arcs in
$\deltain(R)$ and by adding arcs from
$\deltaout(R)$ we do not introduce
directed circuits. Furthermore, we have
\[
|A''| \geq  |A'| - |\deltain(R)| + |\deltaout(R)|,
\]
because $\deltaout(R)$, by the definition of
$R$, cannot contain arcs from $A'$. The
graph $\tilde{D}$ is Eulerian (ignoring the isolated vertices). So
$|\deltain(R)| = |\deltaout(R)|$ and hence $|A''| = |A'|$. The subset $A''$ cannot contain
edges from $\deltain(x_{2n+1})$ because $A'$ is acyclic.
\end{proof}

\begin{example} (third continuation of Example~\ref{ex:example}) The
  additional vertex $x_7$ has two incoming arcs in the maximum acyclic
  subgraph $A'$ of $\tilde{D}$.
  
Now we apply the procedure from the proof of Lemma~\ref{lem:Eulerian}
to determine a maximum acyclic subgraph in which $x_7$ has no
incoming arcs. Using only arcs in $A'$ the
only vertex reachable from $x_7$ is $x_7$ itself. So $R =
  \{x_7\}$ and
\[
  \deltaout(\{x_7\}) = \{(x_7,x_1), (x_7, x_2)\} \quad \text{and} \quad \deltain(\{x_7\})
  = \{(x_6,x_7), (x_6,x_7)\}
\]
and $A'' = (A' \setminus \deltain(\{x_7\})) \cup \deltaout(\{x_7\})$
determines the following maximum acyclic subgraph in which all
vertices but the isolated vertex $x_3$ are reachable from $x_7$.
\begin{center}
\begin{tikzpicture}
  \foreach \i in {1,2,3}
    \node[circle, draw, minimum size=0.2cm] (x\i) at (0,4-\i) {$x_\i$};

  \foreach \i in {4,5,6}
    \node[circle, draw, minimum size=0.2cm] (x\i) at (2.5,7-\i) {$x_\i$};

   \node[circle, draw, minimum size=0.2cm] (x7) at (1.25,0) {$x_7$};
    
  \draw[->, line width=0.5pt] (x1) to (x4);
  \draw[->, line width=1.5pt] (x1) to (x6);
  \draw[->, line width=1.5pt] (x2) to (x5);
  \draw[->, line width=1.5pt] (x2) to (x6);
  \draw[->, line width=1.5pt] (x4) to (x2);
  \draw[->, line width=1.5pt] (x5) to (x1);

  \draw[->, line width=1.5pt] (x7) to [bend left=90] (x1);
  \draw[->, line width=1.5pt] (x7) to [bend left=80] (x2);
  \draw[->, line width=0.5pt] (x6) to [bend left = 20] (x7);
  \draw[->, line width=0.5pt] (x6) to [bend left = 30] (x7);
\end{tikzpicture}
\end{center}
\end{example}

\section*{Acknowledgements}

We like to thank Arne Heimendahl and Andreas Spomer for helpful
discussions. We also like to thank Alberto Abbondandolo for careful
reading the first version of this paper and for his many valuable
suggestions. We thank the anonymous referee for the helpful comments, 
suggestions, and corrections. The authors are partially supported by the SFB/TRR 191
``Symplectic Structures in Geometry, Algebra and Dynamics'' funded by
the DFG.

\end{document}